\pdfoutput=1
\documentclass[11pt]{article}

\usepackage[margin=1.1in]{geometry}
\usepackage{amsmath,amssymb,amsthm}
\usepackage{mathtools}
\usepackage{booktabs}
\usepackage{microtype}
\usepackage{xcolor}
\usepackage[colorlinks=true,linkcolor=blue!55!black,citecolor=blue!55!black,urlcolor=blue!55!black]{hyperref}
\usepackage[capitalize,noabbrev]{cleveref}

\theoremstyle{plain}
\newtheorem{lemma}{Lemma}                 
\newtheorem{proposition}{Proposition}
\newtheorem{corollary}{Corollary}
\newtheorem{conjecture}{Conjecture}
\newtheorem*{mainthm}{Main Theorem}
\newtheorem*{thmA}{Theorem A}
\newtheorem*{thmB}{Theorem B}
\theoremstyle{definition}
\newtheorem*{definitionx}{Definition}
\theoremstyle{remark}
\newtheorem{remark}{Remark}

\newcommand{\Z}{\mathbb{Z}}
\newcommand{\C}{\mathbb{C}}
\newcommand{\F}{\mathbb{F}}
\newcommand{\Nmin}{N_{\min}}
\newcommand{\smin}{s_{\min}}
\DeclareMathOperator{\pc}{popcount}
\DeclareMathOperator{\per}{per}

\title{Minimum modulus for the unique multiset-sum problem}
\author{Jos\'e A.\ R.\ Fonollosa\thanks{Universitat Polit\`ecnica de
Catalunya, Barcelona, Spain. \texttt{jose.fonollosa@upc.edu}. ORCID:
\href{https://orcid.org/0000-0001-9513-7939}{0000-0001-9513-7939}.}}
\date{July 2026}

\begin{document}
\maketitle

\begin{abstract}
Fix $n \ge 2$. A set $A = \{a_0 < a_1 < \dots < a_{n-1}\}$ of $n$ residues in
$\Z_N$ is \emph{valid mod $N$} if the all-ones multiset is the \emph{only}
size-$n$ multiset drawn from $A$ whose sum is $p := \sum_i a_i \pmod N$. For the
super-increasing set $A = \{2^k - 1 : 0 \le k \le n-1\}$ we determine the least
valid modulus exactly:
$\Nmin(n) = 2^{\,n} - 2^{\lfloor \log_2 n \rfloor}$ for all $n \ge 2$.
Both directions of the proof are elementary, resting on a sharp
minimal-digit-sum estimate for representations by binary coins, and the full
theorem has been machine-checked in Lean~4/Mathlib for all $n$
(\url{https://github.com/jarfo/min-modulus}). We conjecture
that no size-$n$ residue set admits a smaller valid modulus.

This validity condition is exactly what makes the permanent of an $n \times n$
matrix equal to a single coefficient of a row-product polynomial modulo
$x^N - 1$, extractable by a size-$N$ discrete Fourier (or number-theoretic)
transform; the theorem thus identifies the smallest transform, $N \approx 2^n$,
for which this evaluation is exact. That application --- and the resulting common
framework for the classical formulas of Ryser and Glynn and this transform ---
is developed in a companion paper \cite{fonollosa2026transform}.

\medskip
\noindent\textbf{Keywords:} multiset sums, distinct sums, minimum modulus,
super-increasing sequence, permanent, formal verification.

\noindent\textbf{MSC 2020:} 11B13 (primary); 68V20 (secondary).
\end{abstract}

\section{Introduction}

The permanent of an $n \times n$ matrix,
$\per B = \sum_{\sigma \in S_n} \prod_{i=1}^n b_{i\sigma(i)}$, is a classical
hard object --- computing it exactly is \#P-complete even for $0$--$1$ matrices
\cite{valiant1979}. This paper grew out of a transform-based route to it. A
standard generating-function identity expresses $\per B$ as a single coefficient
of the row-product polynomial
$\prod_i \bigl(\sum_j b_{ij} x^{a_j}\bigr) \bmod (x^N - 1)$, which a size-$N$
discrete Fourier transform (over $\C$, or a number-theoretic transform over a
finite field for exact integer arithmetic) extracts exactly --- \emph{provided}
no size-$n$ multiset of the exponents $a_j$ other than the all-ones one reaches
the target $p = \sum_j a_j$ modulo $N$. Since the cost is governed by $N$, one
wants the \emph{smallest} modulus for which some exponent set has this property.
That transform view of the permanent, and its relation to the classical formulas
of Ryser and Glynn, is developed in the companion paper
\cite{fonollosa2026transform}.

This motivates the following combinatorial question, which we call the
\emph{unique multiset-sum problem} and which seems natural independently of the
application. Call a set $A = \{a_0 < \dots < a_{n-1}\} \subseteq \Z_N$
\emph{valid mod $N$} if the only size-$n$ multiset with elements from $A$
summing to $p = \sum_i a_i \pmod N$ is the multiset containing each element
exactly once. How small can $N$ be?

For the \emph{super-increasing} set
\[
A \;=\; \{\,2^k - 1 : 0 \le k \le n-1\,\} \;=\; \{0, 1, 3, 7, \dots, 2^{n-1}-1\},
\]
we answer the question exactly.

\begin{mainthm}
For every $n \ge 2$, the least modulus at which the super-increasing set is
valid is
\[
\Nmin(n) \;=\; 2^{\,n} - 2^{\lfloor \log_2 n \rfloor}.
\]
\end{mainthm}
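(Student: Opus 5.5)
We begin by translating validity into a statement about binary coin representations. A size-$n$ multiset from $A$ is given by multiplicities $c_0,\dots,c_{n-1}\ge 0$ with $\sum c_k=n$, and its sum is $\sum c_k(2^k-1)=\sum c_k2^k-n$. The condition that this sum is $\equiv p$ therefore reads $S:=\sum_k c_k2^k\equiv 2^n-1 \pmod N$. So $A$ is valid mod $N$ exactly when no value $M=2^n-1+tN$ other than the trivial case admits a representation as a sum of exactly $n$ coins from $\{2^0,\dots,2^{n-1}\}$. For $t=0$ the only such representation is the binary one: $2^n-1$ has popcount $n$, and any other representation arises by splitting coins, which strictly increases the count. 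The basic tool is the following. A positive integer $M$ is a sum of exactly $n$ such coins iff $\smin(M)\le n\le M$, since splitting a coin $2^k>1$ raises the count by one and the maximal count is $M$. Here
\[
\smin(M)=\lfloor M/2^{n-1}\rfloor+\pc\bigl(M \bmod 2^{n-1}\bigr).
\]
Writing $m=\lfloor\log_2 n\rfloor$, we must show that $N_0=2^n-2^m$ is valid and that every $N<N_0$ is invalid.

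\textbf{Invalidity below $N_0$.} Suppose first $N\le 2^n-1-n$, and take $t=-1$. Then $M=2^n-1-N$ satisfies $n\le M<2^n$, and hence $\smin(M)\le 1+(n-1)=n$. So $M$ has an $n$-coin representation. Next suppose $N=2^n-d$ with $2^m<d\le n$, and take $t=1$. Then $M=2^{n+1}-1-d$, which has quotient $3$ and remainder $2^{n-1}-1-d$. The popcount of that remainder is $n-1-\pc(d)$, so
\[
\smin(M)=n+2-\pc(d).
\]
Since $2^m<d<2^{m+1}$, $d$ is not a power of two, so $\pc(d)\ge 2$. Hence $\smin(M)\le n$, and also $M\ge n$ holds trivially. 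This covers every $N<N_0$.

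\textbf{Validity at $N_0$.} Every admissible $M$ lies in $[n,\,n2^{n-1}]$.
\begin{itemize}
\item $t\le -2$ gives $M<0$.
\item $t=-1$ gives $M=2^m-1<n$, so no representation with $n$ coins exists.
\item $t=1$ is the computation above with $d=2^m$. Then $\pc(d)=1$ and $\smin(M)=n+1>n$.
\item For $t\ge 2$, write $M=(2t+2)2^{n-1}-(t2^m+1)$. When $t2^m<2^{n-1}$ the quotient is $2t+1$ and the remainder is $2^{n-1}-1-t2^m$, whose popcount is $n-1-\pc(t)$. This gives
\[
\smin(M)=n+2t-\pc(t)>n.
\]
\end{itemize}

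The main obstacle is the remaining range, where $t2^m\ge 2^{n-1}$ but still $M\le n2^{n-1}$, i.e.\ $t\lesssim n/2$. This can only happen for small $n$, or when the quotient drops by $q=\lceil (t2^m+1)/2^{n-1}\rceil\ge 2$. In that case I would bound $\smin(M)\ge 2t+2-q$ together with a popcount lower bound on $q2^{n-1}-1-t2^m$. Using $2^m\le n$ and $t\le n/2$ gives $q\le n^2/2^n+1$, which is tiny, so $2t+2-q$ plus the popcount should still exceed $n$. The finitely many small $n$ where these crude bounds fail would then be checked directly. This sharp minimal-digit-sum estimate over all $t\ge2$ is where I expect the real care to be needed.
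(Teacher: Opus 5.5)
Your invalidity half is correct, and it is more economical than the paper's four-case argument. You note that the negative witness $V=-N$ works uniformly for every $N\le 2^n-1-n$; the paper itself remarks that for $V<0$ only the floor $n\le M$ matters. So the positive witness $V=+N$ is needed only in the thin band $2^m<d\le n$, where $d$ is automatically not a power of two. The remainder computation there needs $d\le 2^{n-1}-1$, which holds since $d\le n\le 2^{n-1}-1$ for $n\ge 3$; for $n=2$ the band is empty.

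The gap is in the validity half, exactly where you flag it. For $t\ge 2$ with $t2^m\ge 2^{n-1}$ you give only a plan, and its decisive inequality is asserted rather than proved. With $q\ge 2$ and $t\le n/2$ you have $2t+2-q\le n$, so $\smin(M)>n$ would have to come entirely from an unspecified popcount bound. The small $n$ to be checked by hand are also never identified.

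The missing observation is that this case is vacuous. If $t\ge 2^{n-1-m}$ then $t\ge 2^{n-1}/n$ (because $2^m\le n$), hence
\[
M \;=\; 2^n-1+t\,(2^n-2^m)\;\ge\; 2^n-1+\frac{2^{n-1}}{n}\,(2^n-n)\;=\;2^{n-1}-1+\frac{2^{2n-1}}{n}\;>\;n\,2^{n-1}.
\]
The last step is equivalent to $2^n>n^2-n+n\,2^{1-n}$, which holds for every $n\ge 2$. This contradicts the range bound $M\le n2^{n-1}$ you already stated. So your exact formula $\smin(M)=n+2t-\pc(t)$ covers every admissible $t\ge 1$. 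For $n=2$ the same inequality also disposes of $t=1$, where your remainder computation does not literally apply because $2^m=2^{n-1}$. Your own estimate on $q$ already forces $q<2$ once $n\ge 5$; you just did not draw the conclusion.

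The paper avoids any range analysis. Its step lemma $\smin(M+2^n-2^t)\ge\smin(M)+1$ is proved by a two-case popcount estimate on the remainder modulo $2^{n-1}$. By induction from $\smin(2^n-1)=n$ it gives $\smin(M_j)\ge n+j$ for all $j\ge 1$. Your route, once patched, yields the exact surplus $2t-\pc(t)$, which the paper records only as a remark and corollary. The step lemma instead buys uniformity and is reused at the other single-power gaps $2^t$.
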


The first values, for $n = 2, 3, \dots, 13$, are
\[
2,\; 6,\; 12,\; 28,\; 60,\; 124,\; 248,\; 504,\; 1016,\; 2040,\; 4088,\; 8184.
\]

The proof has two independent halves. The \emph{upper bound}
(\hyperref[sec:validity]{Theorem~A}): the set is valid at $N = 2^n - 2^m$,
$m = \lfloor \log_2 n \rfloor$. After a change of variables, a collision is a
representation of a shifted target by binary coins $\{2^0, \dots, 2^{n-1}\}$
with digit sum exactly $n$; a single step estimate (\cref{lem:step}) shows each
additional multiple of $N$ forces the minimal digit sum up by at least one, and
an induction kills all multiples at once --- no case analysis on $n$, no bound
on the multiple. The \emph{lower bound}
(\hyperref[sec:optimality]{Theorem~B}): every smaller modulus admits an
explicit collision, produced by a complete achievability criterion for the
values of the collision map (\cref{prop:master}). The boundary case is sharp
from both sides in a satisfying way: at gap $2^t$ with $t > m$ a
\emph{negative} multiple of the modulus provides the collision, and at $t = m$
that witness misses by exactly one unit --- the definition of
$m = \lfloor \log_2 n \rfloor$ enters the two halves through the two sides of
the same inequality $2^m \le n \le 2^{m+1}-1$.

Beyond the pencil-and-paper proof, the full statement --- both bounds, all $n$
--- has been formalized and kernel-checked in Lean~4 with Mathlib
\cite{demoura2021,mathlib2020}, with no unproven assumptions
(\cref{sec:certification}).

We conjecture that the super-increasing set is presumably not just convenient but optimal:

\begin{conjecture}\label{conj:global}
For every $n \ge 2$ and every $N < 2^{\,n} - 2^{\lfloor \log_2 n \rfloor}$, no
set of $n$ residues is valid mod $N$; that is, the super-increasing set attains
the least valid modulus over all size-$n$ sets.
\end{conjecture}

\paragraph{Related work.}
The condition studied here is a single-target, multiset, modular relative of
several classical uniqueness conditions on sumsets. Sets all of whose
\emph{subset} sums are distinct (Erd\H{o}s's problem; see
\cite[Problem C8]{guy2004}) and $B_h$/Sidon-type sets, where all $h$-fold sums
are distinct \cite{obryant2004}, both demand far more --- distinctness at
\emph{every} target --- and correspondingly force larger ranges. Validity asks
for uniqueness at the single target $p$ only, which is what allows a modulus
below $2^n$ despite the $\binom{2n-1}{n}$ candidate multisets.
Super-increasing sequences are familiar from knapsack cryptosystems
\cite{merkle1978}, where they make \emph{subset}-sum decoding easy; here the
closely related set $\{2^k - 1\}$ is extremal for a different, multiset
uniqueness property. On the application side, exact permanent algorithms
descend from Ryser \cite{ryser1963} and Glynn \cite{glynn2010}; the companion
paper \cite{fonollosa2026transform} shows that these and the transform used here
are three instances of one framework --- orthogonal evaluation schemes that
isolate the permanent as a single coefficient --- with the present
minimum-modulus result identifying the smallest cyclic instance.

\paragraph{Organization.}
\Cref{sec:problem} fixes notation and states the results.
\Cref{sec:reduction,sec:digitsum} reduce validity to a digit-sum question and
solve the latter. \Cref{sec:validity} proves the upper bound (Theorem~A),
\cref{sec:optimality} the matching lower bound (Theorem~B).
\Cref{sec:certification} reports the machine certification, and
\cref{sec:open} collects open problems.

\section{The problem and the main results}\label{sec:problem}

Fix $n \ge 2$. For a set $A = \{a_0 < a_1 < \dots < a_{n-1}\}$ of $n$ residues
in $\Z_N$, write $p := \sum_{i} a_i \bmod N$. A size-$n$ multiset with elements
from $A$ is encoded by its multiplicity vector
$k = (k_0, \dots, k_{n-1}) \in \Z_{\ge 0}^n$ with $\sum_i k_i = n$.

\begin{definitionx}
$A$ is \emph{valid mod $N$} if the only $k \ge 0$ with $\sum_i k_i = n$ and
$\sum_i k_i a_i \equiv p \pmod N$ is $k = (1, 1, \dots, 1)$.
\end{definitionx}

We study the super-increasing set $A = \{2^k - 1 : 0 \le k \le n-1\}$ and
\[
\Nmin(n) \;=\; \min\{\, N \ge 2 \;:\; A \text{ is valid mod } N \,\}.
\]
Throughout, put
\[
m \;=\; \lfloor \log_2 n \rfloor, \qquad\text{so } 2^m \le n \le 2^{m+1} - 1,
\qquad\text{and}\qquad N \;=\; 2^{\,n} - 2^{\,m}.
\]

The Main Theorem asserts $\Nmin(n) = N$; it splits into Theorem~A
($\Nmin(n) \le N$: the set is valid at $N$; \cref{sec:validity}) and Theorem~B
($\Nmin(n) \ge N$: every smaller modulus admits a collision;
\cref{sec:optimality}).

\section{Reduction to powers of two}\label{sec:reduction}

Substitute $c = k - (1,\dots,1)$, so $c_i \ge -1$ and $\sum_i c_i = 0$. Because
$a_i = 2^i - 1$,
\[
\sum_i k_i a_i - p \;=\; \sum_i c_i a_i \;=\; \sum_i c_i (2^i - 1)
\;=\; \sum_i c_i 2^i - \sum_i c_i \;=\; \sum_i c_i 2^i .
\]
Write $V(c) := \sum_{i=0}^{n-1} c_i 2^i$. Call $c$ \emph{balanced} if
$c_i \ge -1$ for all $i$, $\sum_i c_i = 0$, and $c \ne 0$.

\begin{lemma}[reduction]\label{lem:reduction}
$A$ is valid mod $N$ if and only if no balanced $c$ has $N \mid V(c)$.
\end{lemma}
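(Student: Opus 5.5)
The plan is to check that the substitution $c = k - (1,\dots,1)$ is a bijection between the two objects being compared. On one side are the multiplicity vectors $k \in \Z_{\ge 0}^n$ with $\sum_i k_i = n$ and $k \ne (1,\dots,1)$. On the other side are the balanced vectors $c$. Once this is in place, the congruence conditions correspond exactly by the identity displayed just before the lemma.

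First, the correspondence of constraints. The map $k \mapsto c = k - \mathbf{1}$ is a bijection of $\Z^n$ with inverse $c \mapsto c + \mathbf{1}$. Under it:
\begin{itemize}
\item $k_i \ge 0$ corresponds to $c_i \ge -1$;
\item $\sum_i k_i = n$ corresponds to $\sum_i c_i = 0$;
\item $k \ne \mathbf{1}$ corresponds to $c \ne 0$.
\end{itemize}
So the admissible non-trivial multisets are in bijection with the balanced $c$.

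Second, the correspondence of the sum condition. For the set $A=\{2^i-1\}$ we have $p = \sum_i a_i$ computed over the integers and then reduced, so $\sum_i k_i a_i \equiv p \pmod N$ holds if and only if $N \mid \sum_i k_i a_i - \sum_i a_i$. The displayed computation gives
\[
\sum_i k_i a_i - \sum_i a_i \;=\; \sum_i c_i(2^i-1) \;=\; V(c) - \sum_i c_i \;=\; V(c),
\]
where the last step uses $\sum_i c_i = 0$. Hence the condition is exactly $N \mid V(c)$.

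Combining the two steps: $A$ fails to be valid mod $N$ if and only if some non-trivial admissible $k$ satisfies the congruence, which happens if and only if some balanced $c$ has $N \mid V(c)$. Taking the contrapositive gives the lemma.

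The only point needing care is a technicality: the definition treats $A$ as $n$ distinct residues in $\Z_N$. Distinctness of $\{2^i - 1\}$ mod $N$ is not needed for the equivalence, because the statement concerns multiplicity vectors indexed by $i$ and the argument is purely over $\Z$. I will note this remark rather than treat it as a real obstacle; the proof is otherwise a direct verification.
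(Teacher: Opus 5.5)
Your proposal is correct and follows the same route as the paper, which likewise treats the lemma as immediate from the displayed identity. The paper's argument is exactly your bijection $k \leftrightarrow c = k - \mathbf{1}$ between nontrivial admissible multiplicity vectors and balanced $c$, with the congruence becoming $N \mid V(c)$; your remark on distinctness also matches the paper's note that distinctness is the special case $c = e_i - e_j$.
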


\begin{proof}
Immediate from the displayed identity: nontrivial solutions $k$ of the validity
congruence correspond bijectively to balanced $c$ with $V(c) \equiv 0 \pmod N$.
\end{proof}

Note that a balanced $c$ has every negative entry equal to $-1$ (since
$c_i \ge -1$), so the distinctness of the residues $a_i \bmod N$ is subsumed:
it is the special case $c = e_i - e_j$, $V = 2^i - 2^j$.

\begin{lemma}[$V \ne 0$]\label{lem:vnonzero}
No balanced $c$ has $V(c) = 0$.
\end{lemma}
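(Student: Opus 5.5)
Suppose, for contradiction, that $c$ is balanced and $V(c)=0$. Since $c\neq 0$ and $\sum_i c_i=0$, some entry is positive and some is negative. Split the index set into $P=\{i: c_i>0\}$ and $M=\{i: c_i=-1\}$; every negative entry equals $-1$, as noted above. The equation $V(c)=0$ then reads
\[
\sum_{i\in P} c_i 2^i \;=\; \sum_{j\in M} 2^j ,
\]
and the balance condition reads $\sum_{i\in P} c_i = |M|$. The plan is to compare the number of coins on each side. The right-hand side is a sum of \emph{distinct} powers of two, so it is exactly the binary representation of the common value $S$. Hence $|M| = \pc(S)$.

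The key step is the elementary fact that any representation of a positive integer $S$ as a sum of powers of two, with repetitions allowed, uses at least $\pc(S)$ terms. Equality holds only if the terms are distinct, i.e.\ the representation is the binary expansion itself. I would prove this by the carry argument. If some power $2^i$ appears at least twice, replace two copies by one copy of $2^{i+1}$. This keeps the sum and strictly decreases the number of terms. The process terminates at the binary expansion, which has $\pc(S)$ terms, so any representation with a repeated power has strictly more than $\pc(S)$ terms.

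Apply this to the left-hand side, a representation of $S$ with $\sum_{i\in P} c_i = |M| = \pc(S)$ terms. By the equality case it must be the binary expansion of $S$. In particular every $c_i$ with $i\in P$ equals $1$, and the set $P$ is exactly the set of binary digits of $S$. Uniqueness of binary expansion then forces $P = M$. This contradicts $P\cap M=\emptyset$ together with $P\neq\emptyset$.

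The only point needing care is the strictness in the equality case of the digit-sum bound, and the carry-reduction argument handles it directly. There is also a shorter route. Let $i_0=\min(P\cup M)$ and reduce $V(c)=0$ modulo $2^{i_0+1}$. This gives $c_{i_0}\equiv 0 \pmod 2$, which rules out $c_{i_0}=-1$. So $i_0\in P$ with $c_{i_0}$ even, hence $c_{i_0}\ge 2$. One could build an induction on this, but the popcount argument is cleaner and also foreshadows the minimal-digit-sum estimate used later in \cref{sec:digitsum}, so I would present that one.
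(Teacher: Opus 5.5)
Your proof is correct and is essentially the paper's argument: both rest on the carry-move fact that the binary expansion is the unique representation by powers of two with the fewest terms, and this forces the trivial solution. The paper applies it to $k=c+\mathbf{1}$ with target $2^n-1$ via the minimality half of \cref{lem:digitsum} (a forward reference), while you cancel the common all-ones part and apply the uncapped popcount version to $S=\sum_{j\in M}2^j$ (with $M$ your index set of $-1$ entries), which makes your argument self-contained.
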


\begin{proof}
$V(c) = 0$ means $\sum_i k_i 2^i = \sum_i 2^i = 2^n - 1$ with $\sum_i k_i = n$,
$k_i \ge 0$. By the minimality half of \cref{lem:digitsum} below (whose proof
is independent of this lemma), the digit sum over representations of a fixed
target is minimized by the greedy representation \emph{and by it alone}; for
$2^n - 1$ the greedy representation is the all-ones vector, with digit sum
exactly $n$. So $k = (1,\dots,1)$, i.e.\ $c = 0$.
\end{proof}

\paragraph{Range of $V$, via $k$-space.}
Substituting back $k = c + 1$ (as in \cref{lem:vnonzero}), a balanced $c$ with
value $V$ is the same thing as a vector $k \ge 0$ with $\sum_i k_i = n$ and
\[
\sum_i k_i 2^i \;=\; M \;:=\; V + (2^{\,n} - 1).
\]
Each of the $n$ units of digit sum contributes a coin in $[2^0, 2^{n-1}]$, so
trivially
\begin{equation}\label{eq:range}
n \;\le\; M \;\le\; n \cdot 2^{\,n-1}.
\end{equation}
This yields the two range facts used below.

\begin{itemize}
\item \emph{Upper.} If $V = jN$ with $j \ge 1$, then
$jN = M - (2^n - 1) \le n \cdot 2^{n-1} - (2^n - 1) = (n-2)2^{n-1} + 1
 < (n-1)N$ (using $N \ge 2^n - 2^{n-1} = 2^{n-1}$), so
\begin{equation}\tag{$\star$}\label{eq:star}
1 \;\le\; j \;\le\; n - 2.
\end{equation}
\item \emph{No negative multiples at $N$.} If $N \mid V$, $V \ne 0$
(\cref{lem:vnonzero}) and $V < 0$, then
$M \le (2^n - 1) - N = 2^m - 1 \le n - 1 < n \le M$ --- a contradiction (this
step uses the inequality $2^m \le n$, i.e.\ the definition of $m$, applied to
the trivial bound $n \le M$ of \eqref{eq:range}). Hence $N \mid V$, $V \ne 0$
forces $V = jN$ with $j$ as in \eqref{eq:star}. For $n = 2^m$ the margin is a
single unit: the excluded target $2^m - 1$ falls short of $n$ by exactly one.
\end{itemize}

(Equivalently $V \ge n + 1 - 2^n$, attained by $c = (n-1, -1, \dots, -1)$; the
$M$-form makes the extremal computation unnecessary.)

\section{The digit-sum lemma}\label{sec:digitsum}

For a target $M \ge 0$ and coins $\{2^0, 2^1, \dots, 2^{n-1}\}$, let
$\smin(M)$ denote the minimal digit sum $\sum_i k_i$ over all representations
$k_i \ge 0$, $\sum_i k_i 2^i = M$. Write $\pc(x)$ for the number of ones in the
binary expansion of $x \ge 0$.

\begin{lemma}[digit sum]\label{lem:digitsum}
With $M \bmod 2^{n-1}$ the remainder and $\lfloor \cdot \rfloor$ the quotient,
\[
\smin(M) \;=\; \bigl\lfloor M / 2^{\,n-1} \bigr\rfloor
 \;+\; \pc\!\bigl(M \bmod 2^{\,n-1}\bigr),
\]
and the set of \emph{achievable} digit sums for target $M$ is exactly the
integer interval $[\smin(M),\, M]$.
\end{lemma}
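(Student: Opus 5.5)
We argue via \emph{carrying moves} on a representation $k$ of $M$. Replacing two coins $2^i$ by one coin $2^{i+1}$ (for $i \le n-2$) preserves $M$ and lowers the digit sum by exactly one. The reverse move, which splits one coin $2^{i+1}$ into two coins $2^i$ (for $i \ge 0$), raises the digit sum by exactly one. The plan is to identify the unique representation that admits no carrying move, show that it is the minimizer, and then climb from it by single splits up to the all-$2^0$ representation.

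\textbf{Minimality.} Let $g$ be the greedy representation: $g_{n-1} = \lfloor M/2^{n-1}\rfloor$, and $g_0,\dots,g_{n-2}$ are the binary digits of $M \bmod 2^{n-1}$. Its digit sum is the claimed formula. Take any representation $k$. While some $k_i \ge 2$ with $i \le n-2$, apply a carry; the digit sum strictly decreases, so the process terminates. At termination $k_i \in \{0,1\}$ for all $i \le n-2$. Hence $\sum_{i \le n-2} k_i 2^i < 2^{n-1}$, and since $M = k_{n-1}2^{n-1} + \sum_{i\le n-2}k_i2^i$, uniqueness of Euclidean division and of binary expansion forces $k = g$. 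Every representation therefore reaches $g$ by moves that do not increase the digit sum, so $\sum k_i \ge \sum g_i = \smin(M)$. Equality holds only if no carry was performed, i.e.\ $k$ was already $g$. This gives the uniqueness of the minimizer used in \cref{lem:vnonzero}; I would state it explicitly in the proof, since the earlier lemma depends on it.

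\textbf{Achievability of every value in $[\smin(M), M]$.} The upper end is clear: any representation has digit sum at most $M$, because each coin is $\ge 1$, and $k = (M,0,\dots,0)$ attains $M$. Starting from $g$, as long as some coin $2^{i+1}$ with $i+1 \ge 1$ is present, split it; each split raises the digit sum by exactly one. The process stops precisely when all coins are $2^0$, that is, at digit sum $M$. The sequence of digit sums therefore runs through every integer from $\smin(M)$ to $M$. Values outside the interval are excluded by the minimality bound and the trivial bound.

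\textbf{Main obstacle.} The only delicate point is the termination-and-uniqueness argument showing that the carry-reduced form is exactly $g$. Coins at the top index $2^{n-1}$ cannot be carried further, so $k_{n-1}$ may be arbitrarily large. The identification with $g$ must therefore go through division by $2^{n-1}$ rather than through the plain binary expansion of $M$. I would set this up carefully via $M = k_{n-1}2^{n-1} + r$ with $0 \le r < 2^{n-1}$. The case $M = 0$ is trivial and is handled separately.
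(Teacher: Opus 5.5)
Your proof is correct and follows essentially the same route as the paper. Carries reduce any representation to the unique carry-free one, which is greedy, giving both minimality and uniqueness of the minimizer; single splits starting from greedy then climb through every digit sum up to $M$. Your explicit identification of the carry-free form via Euclidean division by $2^{n-1}$ (since $k_{n-1}$ is unbounded) just spells out the paper's remark that a carry-free representation is determined by its value.
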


\begin{proof}
\emph{Minimality.} The greedy representation ---
$\lfloor M/2^{n-1} \rfloor$ top coins plus the binary expansion of the
remainder --- attains the stated value. Any representation with some
$k_i \ge 2$, $i \le n-2$, admits a \emph{carry}
$(k_i, k_{i+1}) \to (k_i - 2,\, k_{i+1} + 1)$, lowering the digit sum by $1$;
iterating terminates, and a carry-free representation ($k_i \le 1$ below the
top coin) is determined by its value --- it \emph{is} the greedy one. So every
representation descends to greedy through digit-sum-lowering moves: greedy is
minimal, and it is the unique representation attaining the minimum.

\emph{Contiguity, upward.} Start from greedy, with digit sum $\smin(M)$. While
the current digit sum $s$ is $< M$, some $k_i \ge 1$ with $i \ge 1$ exists ---
otherwise all mass sits at index $0$ and $s = k_0 = M$ --- and the \emph{split}
$(k_i, k_{i-1}) \to (k_i - 1,\, k_{i-1} + 2)$ raises the digit sum by exactly
$1$. Hence the achievable digit sums are exactly $[\smin(M), M]$.
\end{proof}

\begin{remark}
Only the minimality half of \cref{lem:digitsum} is used in
\cref{sec:validity}, and only the contiguity half in \cref{sec:optimality}.
\end{remark}

\section{Validity: the upper bound}\label{sec:validity}

\begin{thmA}
For every $n \ge 2$, the super-increasing set is valid mod
$N = 2^{\,n} - 2^{\,m}$. Equivalently,
$\Nmin(n) \le 2^{\,n} - 2^{\lfloor \log_2 n \rfloor}$.
\end{thmA}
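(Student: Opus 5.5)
The plan is to combine \cref{lem:reduction} with the range facts of \cref{sec:reduction}, which leave only one case. By \cref{lem:reduction}, validity at $N=2^n-2^m$ fails only if some balanced $c$ has $N\mid V(c)$. \Cref{lem:vnonzero} excludes $V(c)=0$. The bullet ``no negative multiples at $N$'' excludes $V(c)<0$. So it suffices to rule out $V(c)=jN$ with $1\le j\le n-2$, as in \eqref{eq:star}.

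In $k$-space this means showing that no $k\ge 0$ with $\sum_i k_i=n$ represents
\[
M_j := jN+(2^n-1).
\]
Equivalently, I will show $\smin(M_j)>n$ for all $j\ge 1$, using only the minimality half of \cref{lem:digitsum}.

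The key step is a one-step estimate: for every target $M\ge 0$,
\[
\smin(M+N)\;\ge\;\smin(M)+1 .
\]
To prove it, write $M=q\,2^{n-1}+r$ with $0\le r<2^{n-1}$, so that $\smin(M)=q+\pc(r)$. Since $N=2\cdot 2^{n-1}-2^m$, we have $M+N=(q+2)2^{n-1}+(r-2^m)$. I will split on $r$ versus $2^m$.

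\emph{Case $r\ge 2^m$.} The remainder of $M+N$ is $r-2^m$, so
\[
\smin(M+N)=q+2+\pc(r-2^m).
\]
Subadditivity of popcount, $\pc(a+b)\le \pc(a)+\pc(b)$, gives $\pc(r)\le \pc(r-2^m)+1$. Hence $\smin(M+N)\ge q+\pc(r)+1=\smin(M)+1$.

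\emph{Case $r<2^m$.} Here $M+N=(q+1)2^{n-1}+(2^{n-1}-2^m+r)$, and the new remainder lies in $[0,2^{n-1})$. The number $2^{n-1}-2^m$ has exactly the bits $m,\dots,n-2$ set, and $r<2^m$ occupies only bits below $m$. The two bit sets are disjoint, so the popcount is $(n-1-m)+\pc(r)$. Therefore
\[
\smin(M+N)=\smin(M)+(n-m)\ \ge\ \smin(M)+1,
\]
because $m<n$.

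With the step estimate in hand, induction on $j$ finishes the proof. The base case is $\smin(2^n-1)=n$: the greedy representation is all ones. Then $\smin(M_j)\ge n+j\ge n+1$ for every $j\ge 1$, so no representation of $M_j$ has digit sum exactly $n$. Hence there is no collision, and the set is valid mod $N$.

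I expect the main obstacle to be making the step estimate uniform, so that it needs no case analysis on $n$ and no bound on $j$. The two places to check carefully are the borrow in the case $r<2^m$, handled by the disjoint-bits observation, and the popcount inequality in the case $r\ge 2^m$. Everything else is bookkeeping with \cref{lem:digitsum}.
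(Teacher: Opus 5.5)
Your proposal is correct and follows the paper's proof essentially step for step. It uses the same reduction (\cref{lem:reduction}, \cref{lem:vnonzero}, exclusion of negative multiples), the same step estimate $\smin(M+N)\ge\smin(M)+1$ split on whether the remainder is at least $2^m$, and the same induction on $j$ from $\smin(2^n-1)=n$. The differences are cosmetic: you justify $\pc(r)\le\pc(r-2^m)+1$ by subadditivity of popcount instead of the paper's explicit borrow argument, and you state the step estimate only for the gap $2^m$ rather than for general $N_t=2^n-2^t$.
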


The proof is an induction on the multiple $jN$, powered by a single step
estimate. We state the estimate for a general power-of-two gap, since
\cref{sec:optimality} reuses it at gaps other than $2^m$.

\begin{lemma}[step]\label{lem:step}
Let $0 \le t \le n-1$ and $N_t := 2^{\,n} - 2^{\,t}$. Then for every $M \ge 0$,
\[
\smin(M + N_t) \;\ge\; \smin(M) + 1.
\]
\end{lemma}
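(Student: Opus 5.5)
The plan is to use the closed form for $\smin$ from \cref{lem:digitsum} and compare the two sides directly. Put $Q := 2^{\,n-1}$ and write $M = qQ + r$ with $0 \le r < Q$, so that $\smin(M) = q + \pc(r)$. Since $N_t = 2Q - 2^t$, we have
\[
M + N_t \;=\; (q+2)Q + r - 2^t .
\]
The whole argument is a case split on whether the subtraction of $2^t$ borrows from the top-coin count.

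\emph{Case $r \ge 2^t$.} Here $0 \le r - 2^t < Q$ is a legitimate remainder. So \cref{lem:digitsum} gives
\[
\smin(M+N_t) = q + 2 + \pc(r - 2^t).
\]
It therefore suffices to show $\pc(r - 2^t) \ge \pc(r) - 1$. This is the standard fact that adding a power of two raises the popcount by at most one: $\pc(y + 2^t) \le \pc(y) + 1$. It follows from the subadditivity $\pc(a+b) \le \pc(a) + \pc(b)$, since a binary addition only merges ones through carries. Apply it with $y = r - 2^t$. This yields $\smin(M+N_t) \ge q + 1 + \pc(r) = \smin(M) + 1$.

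\emph{Case $r < 2^t$.} (This is automatic when $t = n-1$.) Borrow one top coin and write
\[
M + N_t = (q+1)Q + \bigl(Q - 2^t + r\bigr).
\]
Here $Q - 2^t + r$ lies in $[Q - 2^t,\, Q)$, so it is again a valid remainder. Its popcount splits cleanly: $Q - 2^t$ has ones exactly in bit positions $t, \dots, n-2$, while $r < 2^t$ lives in bits $0, \dots, t-1$. The supports are disjoint, so
\[
\pc(Q - 2^t + r) = (n-1-t) + \pc(r) \ge \pc(r).
\]
Hence $\smin(M+N_t) \ge q + 1 + \pc(r) = \smin(M) + 1$, with equality exactly when $t = n-1$.

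No step looks like a genuine obstacle. The only point needing care is the bookkeeping in the second case: one must check that the rewritten remainder really lies in $[0, Q)$, so that the formula of \cref{lem:digitsum} applies to it. One must also check that the disjoint-support claim is stated correctly at the extreme $t = n-1$, where $Q - 2^t = 0$. I would write out the popcount subadditivity in one line (each carry in binary addition replaces two ones by one), so the first case does not rest on an unproved folklore fact.
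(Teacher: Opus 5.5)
Your proof is correct and is essentially the paper's: the same quotient/remainder split by $2^{n-1}$, the same two cases $r \ge 2^t$ and $r < 2^t$, and the same disjoint-bit count for the mask $2^{n-1}-2^t$ in the second case. The only difference is cosmetic: in the first case you get $\pc(r-2^t) \ge \pc(r)-1$ from subadditivity of popcount, whereas the paper tracks the borrow directly (either bit $t$ is set, or the lowest set bit $u>t$ is cleared and bits $t,\dots,u-1$ are set).
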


\begin{proof}
Write $N_t = 2 \cdot 2^{n-1} - 2^t$ and $M = q \cdot 2^{n-1} + R$ with
$0 \le R < 2^{n-1}$; by \cref{lem:digitsum}, $\smin(M) = q + \pc(R)$. Two cases
on $R$.

\emph{If $R \ge 2^t$:} then $M + N_t = (q+2) \cdot 2^{n-1} + (R - 2^t)$ with
$0 \le R - 2^t < 2^{n-1}$, so
\[
\smin(M + N_t) - \smin(M) \;=\; 2 + \pc(R - 2^t) - \pc(R).
\]
Subtracting a power of two costs at most one bit: if bit $t$ of $R$ is set, the
popcount drops by exactly $1$; if not, the borrow clears the lowest set bit of
$R$ above $t$ --- say bit $u$ --- and sets bits $t, \dots, u-1$, a net change
of $(u - t) - 1 \ge 0$. Either way the difference is $\ge 2 - 1 = 1$.

\emph{If $R < 2^t$:} then
$M + N_t = (q+1) \cdot 2^{n-1} + \bigl(R + (2^{n-1} - 2^t)\bigr)$, and the new
remainder is $< 2^{n-1}$. The mask $2^{n-1} - 2^t$ occupies bits
$t, \dots, n-2$, disjoint from $R < 2^t$, so its $n - 1 - t$ bits add: the
difference is $1 + (n - 1 - t) \ge 1$.
\end{proof}

\begin{proof}[Proof of Theorem A]
Suppose a violation. By \cref{lem:reduction,lem:vnonzero} and the exclusion of
negative multiples at $N$ (\cref{sec:reduction}), some balanced $c$ has
$V(c) = jN$ with $j \ge 1$ (only the lower half of \eqref{eq:star} is needed).
In $k = c + 1$ variables this reads $\sum_i k_i 2^i = M_j$ with
$\sum_i k_i = n$, $k_i \ge 0$, where
\[
M_j \;:=\; jN + (2^{\,n} - 1) \;=\; (j+1)2^{\,n} - j2^{\,m} - 1,
\]
so $\smin(M_j) \le n$ by the definition of $\smin$. But the greedy
representation of $M_0 = 2^n - 1$ is one top coin plus the low ones-block, so
by \cref{lem:digitsum}
\[
\smin(M_0) \;=\; 1 + \pc(2^{\,n-1} - 1) \;=\; 1 + (n-1) \;=\; n,
\]
and \cref{lem:step} with $t = m$ (legitimate: $m \le n-1$) gives, by induction
on $j$,
\[
\smin(M_j) \;\ge\; n + j \;>\; n \qquad \text{for all } j \ge 1
\]
--- a contradiction. No upper bound on $j$ and no small-$n$ case analysis are
needed.
\end{proof}

\begin{remark}[exact surplus]
The induction yields only a bound; the surplus is in fact exactly computable.
For $1 \le j \le 2^{\,n-1-m} - 1$ (a range covering \eqref{eq:star} once
$n \ge 5$) we have $j2^m + 1 \le 2^{n-1}$, so writing
$M_j = 2(j+1)\cdot 2^{n-1} - (j2^m + 1)$,
\[
\bigl\lfloor M_j / 2^{\,n-1} \bigr\rfloor = 2j + 1,
\qquad
M_j \bmod 2^{\,n-1} = \bigl(2^{\,n-1-m} - 1 - j\bigr)\cdot 2^{\,m} + (2^{\,m} - 1).
\]
The two binary blocks (bits $\ge m$ and bits $< m$) do not overlap, and
$\pc(2^{\,n-1-m} - 1 - j) = (n-1-m) - \pc(j)$ (complement within $n-1-m$ bits),
so
\[
\smin(M_j) \;=\; (2j+1) + \bigl(n - 1 - \pc(j)\bigr) \;=\; n + 2j - \pc(j).
\]
Since $\pc(j) \le j$ with equality iff $j \le 1$, the induction bound $n + j$
is tight exactly at $j = 1$.
\end{remark}

\begin{corollary}[slack]\label{cor:slack}
The minimal digit-sum surplus is exactly $\smin(M_j) - n = 2j - \pc(j)$,
minimized at $j = 1$ (surplus $1$). Thus the super-increasing set is valid at
$N = 2^{\,n} - 2^{\,m}$ with exactly one unit of slack, attained at $j = 1$.
\end{corollary}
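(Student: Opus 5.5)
The plan is to read the corollary off the exact-surplus Remark, with \cref{lem:step} covering the values of $j$ the Remark does not reach. The claim has three parts: the exact value $\smin(M_j)-n = 2j-\pc(j)$; the fact that the minimum over $j\ge 1$ is $1$, attained only at $j=1$; and the interpretation as ``exactly one unit of slack'' at $N=2^n-2^m$.

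\textbf{Exact formula.} For $1\le j\le 2^{n-1-m}-1$, the Remark's computation applies verbatim. It writes $M_j = 2(j+1)2^{n-1}-(j2^m+1)$ with $1\le j2^m+1\le 2^{n-1}$, which gives quotient $2j+1$. The remainder splits into two non-overlapping binary blocks, with popcounts $(n-1-m)-\pc(j)$ and $m$. By \cref{lem:digitsum} this yields $\smin(M_j)=n+2j-\pc(j)$. I would state the corollary's formula on exactly this range. By \eqref{eq:star}, this range contains every $j$ for which a collision could occur, as soon as $n-2\le 2^{n-1-m}-1$, which holds for $n\ge5$. For $n\in\{2,3,4\}$ I would just evaluate $\smin(M_j)$ directly from \cref{lem:digitsum} for the few admissible $j$.

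\textbf{Minimization.} For every $j\ge1$, including $j$ outside the Remark's range, the induction in the proof of Theorem~A already gives $\smin(M_j)\ge n+j\ge n+2$ when $j\ge2$. So no $j\ge2$ can beat surplus $1$. It then suffices to show $\smin(M_1)=n+1$, which also follows from $2j-\pc(j)$ being $1$ at $j=1$ and $\ge j\ge 2$ for $j\ge2$, since $\pc(j)\le j$.

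\textbf{Edge cases (the main obstacle).} The only real difficulty is making sure $j=1$ lies in the Remark's range, i.e.\ $n-1-m\ge1$. This fails only for $n=2$, where $m=1$. There I would compute directly: with coins $\{1,2\}$ and $M_1=N+3=5$, \cref{lem:digitsum} gives $\smin(5)=\lfloor5/2\rfloor+\pc(1)=3=n+1$. For $n=3,4$ one has $n-1-m=1$, so $j=1$ is covered. Hence the surplus at $j=1$ is exactly $1$ for every $n\ge2$.

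\textbf{Conclusion.} Combining the three parts: every multiple $jN$ with $j\ge1$ has minimal digit sum strictly greater than $n$, which is validity (Theorem~A). The margin is exactly one unit, attained at $j=1$, because $M_1$ is representable with digit sum $n+1$ but not $n$.
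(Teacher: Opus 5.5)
Your proposal is correct and follows the paper's route. The paper reads the corollary directly off the exact-surplus Remark, where $\smin(M_j)=n+2j-\pc(j)$ holds on $1\le j\le 2^{n-1-m}-1$, and obtains the minimization from $\pc(j)\le j$ together with the $n+j$ induction bound of Theorem~A. Your extra care adds rigor without changing the argument: you restrict the exact formula to the Remark's range (it genuinely fails beyond it, e.g.\ $n=3$, $j=4$ gives surplus $6$, not $7$), and you check $n=2$ directly, where that range is empty.
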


\section{Optimality: the lower bound}\label{sec:optimality}

We must show every $N'$ with $2 \le N' < 2^n - 2^m$ is \emph{invalid}: some
balanced $c$ has $N' \mid V(c)$. Unlike at $N$ itself --- where the trivial
bound $n \le M$ of \cref{sec:reduction} rules out negative multiples --- a
smaller modulus may (and in one case \emph{must}) be hit by a \emph{negative}
multiple, so we work with signed $V$ throughout.

\begin{proposition}[master achievability criterion]\label{prop:master}
For an integer $V \ne 0$ there is a balanced $c$ with $V(c) = V$ if and only
if, setting $M := V + (2^{\,n} - 1)$,
\[
n \le M \qquad\text{and}\qquad \smin(M) \le n.
\]
\end{proposition}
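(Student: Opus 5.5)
The plan is to pass to $k$-space, as in the range discussion of \cref{sec:reduction}, and then read both directions off \cref{lem:digitsum}. The substitution $k = c + (1,\dots,1)$ is a bijection between vectors $c$ with $c_i \ge -1$ and $\sum_i c_i = 0$, and vectors $k \ge 0$ with $\sum_i k_i = n$. Under it, $V(c) = V$ holds exactly when $\sum_i k_i 2^i = M := V + (2^n - 1)$. The requirement $c \ne 0$ in the definition of balanced is automatic from $V \ne 0$, because $c = 0$ gives $V = 0$. Hence, for $V \ne 0$, a balanced $c$ with $V(c) = V$ exists if and only if the target $M$ has a representation by the coins $\{2^0,\dots,2^{n-1}\}$ with digit sum exactly $n$. (We also need $M \ge 0$ for $\smin(M)$ to make sense; this follows from $n \le M$ in one direction and from the existence of a representation in the other.)

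\emph{Necessity.} Suppose such a $k$ exists. Then $n \le M$ is the trivial lower half of \eqref{eq:range}: each of the $n$ units of digit sum contributes a coin of value at least $2^0 = 1$. Also $\smin(M) \le n$ holds directly from the definition of $\smin$ as a minimum, since $k$ is a representation of $M$ with digit sum $n$.

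\emph{Sufficiency.} Assume $n \le M$ and $\smin(M) \le n$. The contiguity half of \cref{lem:digitsum} says the achievable digit sums for $M$ form the full integer interval $[\smin(M), M]$. The two hypotheses say exactly that $n$ lies in this interval. So some representation $k \ge 0$ of $M$ has $\sum_i k_i = n$, and $c = k - (1,\dots,1)$ is then balanced with $V(c) = V$.

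Essentially nothing here is an obstacle; the one point needing care is bookkeeping. We must check that nonnegativity of $M$ is covered in both directions, and that the nonvanishing of $c$ comes from $V \ne 0$ rather than being assumed separately. The substantive work, namely the split moves establishing contiguity, has already been done in \cref{lem:digitsum}.
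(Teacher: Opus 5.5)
Your proposal is correct and matches the paper's proof: the bijection $k = c + (1,\dots,1)$ to representations of $M$ with digit sum $n$, then reading $\smin(M) \le n \le M$ off the interval $[\smin(M), M]$ of achievable digit sums in \cref{lem:digitsum}. Your extra bookkeeping ($c \ne 0$ forced by $V \ne 0$, and $M \ge 0$ in both directions) is left implicit in the paper and is handled correctly.
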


\begin{proof}
$k = c + 1$ bijects balanced $c$ with value $V$ onto $k \ge 0$,
$\sum_i k_i = n$, $\sum_i k_i 2^i = M$. By \cref{lem:digitsum} the achievable
digit sums for target $M$ are exactly the interval $[\smin(M), M]$, so digit
sum $n$ occurs iff $\smin(M) \le n \le M$.
\end{proof}

Three specializations, computed by quotient/remainder by $2^{n-1}$ as in
\cref{sec:validity} and the complement identity
$\pc(2^{n-1} - 1 - x) = (n-1) - \pc(x)$ for $0 \le x < 2^{n-1}$:
\begin{align*}
0 < V \le 2^{\,n-1}&: & \text{achievable} &\iff \pc(V - 1) \le n - 2,\\
2^{\,n-1} < V < 2^{\,n}&: & \text{achievable} &\iff \pc(V - 2^{\,n-1} - 1) \le n - 3,\\
V < 0&: & \text{achievable} &\iff n \le M \quad (\text{i.e.\ } V \ge n + 1 - 2^{\,n}).
\end{align*}
(For $V < 0$: $M < 2^n - 1$, so $\lfloor M/2^{n-1} \rfloor \le 1$ and
$\smin(M) \le 1 + (n-1) = n$ automatically --- only the trivial range bound
$n \le M$ bites: every negative value down to that floor is achievable.)

\begin{thmB}
For every $n \ge 2$ and every $2 \le N' < 2^{\,n} - 2^{\,m}$, the
super-increasing set is invalid mod $N'$. Hence
$\Nmin(n) = 2^{\,n} - 2^{\lfloor \log_2 n \rfloor}$.
\end{thmB}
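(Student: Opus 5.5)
The plan is to exhibit, for each modulus $N'$ with $2 \le N' < 2^n - 2^m$, a single nonzero multiple $V \in \{-N', N'\}$ that is achievable by some balanced $c$. Then $N' \mid V(c)$, and \cref{lem:reduction} shows that $A$ is invalid mod $N'$. Achievability will be checked only through \cref{prop:master}, in the form of its three displayed specializations. The case $n=2$ is vacuous, since there is no $N'$ with $2 \le N' < 2$, so I would assume $n \ge 3$. I would split on the size of $N'$ relative to $2^n - n$.

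\emph{Small moduli, $2 \le N' \le 2^n - n - 1$.} Take $V = -N' < 0$. By the third specialization, $V$ is achievable iff $V \ge n+1-2^n$, which is exactly $N' \le 2^n - n - 1$. So every modulus up to this floor is killed by a negative multiple. This is where the remark in the paper applies: a gap $2^t$ with $t > m$ is caught by a negative witness. No digit-sum computation is needed in this case, since the proposition already makes $\smin(M) \le n$ automatic for $V<0$.

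\emph{Large moduli, $2^n - n \le N' < 2^n - 2^m$.} Write $N' = 2^n - d$ with $2^m < d \le n$. Since $n \le 2^{m+1}-1$, the gap satisfies $2^m < d < 2^{m+1}$, so $d$ is not a power of two and $\pc(d) \ge 2$. Take $V = N'$. Because $d \le n < 2^{n-1}$ for $n \ge 3$, we have $2^{n-1} < V < 2^n$, and the second specialization applies. It says $V$ is achievable iff $\pc(2^{n-1} - d - 1) \le n-3$. Since $0 \le d < 2^{n-1}$, the complement identity gives $\pc(2^{n-1}-1-d) = (n-1) - \pc(d)$. The condition therefore becomes $\pc(d) \ge 2$, which holds. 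This is the other side of the inequality $2^m \le n \le 2^{m+1}-1$ at work: at $d = 2^m$ exactly, the popcount condition fails by one, matching \cref{cor:slack}.

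The two ranges together cover all $2 \le N' < 2^n - 2^m$. Combined with Theorem~A, this gives $\Nmin(n) = 2^n - 2^m$. The main obstacle is bookkeeping at the seam between the two regimes. I would check that the ranges really abut at $2^n - n - 1$ and $2^n - n$, and that the hypotheses of the specializations hold there: $V \ne 0$, the range $2^{n-1} < V < 2^n$, and $d < 2^{n-1}$ for the complement identity. I would also verify small cases such as $n = 3, 4$ directly against the listed values $6$ and $12$. If any edge fails, the fallback is to apply \cref{prop:master} directly to $M = V + 2^n - 1$ rather than through the specializations.
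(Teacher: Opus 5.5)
Your proof is correct, but it is organized differently from the paper's. The paper splits the range at $2^{n-1}$ into four cases. It uses the positive witness $V=N'$ for $N'<2^{n-1}$ and for gaps $s=2^n-N'$ with $\pc(s)\ge 2$. It resorts to the negative witness $V=-N'$ only at $N'=2^{n-1}$ and at single-power gaps $2^t$, $m<t\le n-2$; these are exactly the moduli where \cref{lem:step} rules out every positive multiple.

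You instead observe that the third specialization alone handles every $N'\le 2^n-n-1$ via $V=-N'$. This single case absorbs the paper's cases (i), (ii), (iv) and the part of (iii) with gap $s>n$. What remains is the strip $2^n-n\le N'<2^n-2^m$, which is empty when $n=2^m$. There the gap $d$ is confined to $(2^m,2^{m+1})$ by $n\le 2^{m+1}-1$, so $\pc(d)\ge 2$ holds automatically and the second specialization applies.

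Your route is shorter: it needs no split at $2^{n-1}$ and no separate treatment of single-power gaps. The inequality $n\le 2^{m+1}-1$ enters in your positive case rather than, as in the paper, in the negative case (iv). The paper's route buys extra structure. It uses the $j=1$ witness wherever one exists, and so pinpoints exactly which moduli \emph{require} a negative multiple. That is the content of its sharpness discussion and its remark that the case-(iv) witness is necessarily negative.
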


\begin{proof}
For $n = 2$ the range is empty; assume $n \ge 3$. Four cases.

\emph{(i) $2 \le N' < 2^{n-1}$.} Take $V = N'$. Then $V - 1 < 2^{n-1} - 1$, so
its $n-1$ low bits are not all ones: $\pc(V - 1) \le n - 2$. Invalid at
$j = 1$.

\emph{(ii) $N' = 2^{n-1}$.} Positive multiples all fail: $V = j \cdot 2^{n-1}$
gives $M = (j+2)2^{n-1} - 1$, so $\smin(M) = (j+1) + (n-1) = n + j > n$. Take
$V = -2^{n-1}$: achievable iff $2^{n-1} \ge n + 1$, true for $n \ge 3$
(equality at $n = 3$). Invalid, necessarily by a negative multiple. (E.g.\
$n = 4$, $N' = 8$: $k = (3,0,1,0)$, i.e.\ the multiset $\{0,0,0,3\}$ collides
with $\{0,1,3,7\}$ --- both sum to $3$ mod $8$.)

\emph{(iii) $2^{n-1} < N' < 2^n - 2^m$ with $s := 2^n - N'$ not a power of
two.} Here $2^m < s < 2^{n-1}$ and $\pc(s) \ge 2$. Take $V = N'$: then
$V - 2^{n-1} - 1 = (2^{n-1} - 1) - s$, the complement of $s$ in $n-1$ bits, so
$\pc = (n-1) - \pc(s) \le n - 3$. Invalid at $j = 1$. (In particular, every gap
$s$ with $\pc(s) \ge 2$ is already invalid at $j = 1$.)

\emph{(iv) $N' = 2^n - 2^t$ a single-power gap, $m + 1 \le t \le n - 2$.} By
the case-(iii) computation, $j = 1$ now gives $\pc = n - 2 > n - 3$:
impossible. Instead take
\[
V \;=\; -N' \;=\; 2^{\,t} - 2^{\,n}, \qquad \text{i.e.} \quad M = 2^{\,t} - 1:
\]
$\smin(M) = t \le n - 2 < n$, and the range condition $n \le M = 2^t - 1$ holds
because
\[
2^{\,t} \;\ge\; 2^{\,m+1} \;\ge\; n + 1,
\]
by the definition $m = \lfloor \log_2 n \rfloor$ (i.e.\ $n \le 2^{m+1} - 1$).
Invalid at $j = -1$.
\end{proof}

In summary (writing $s := 2^n - N'$):

\begin{center}
\begin{tabular}{@{}llll@{}}
\toprule
case & range & witness target $M$ & mechanism \\
\midrule
(i)   & $2 \le N' < 2^{n-1}$                    & $2^n - 1 + N'$ & $V = N'$ ($j = 1$) \\
(ii)  & $N' = 2^{n-1}$                          & $2^{n-1} - 1$  & $V = -N'$ --- negative, necessarily \\
(iii) & $2^{n-1} < N'$, $\pc(s) \ge 2$          & $2^n - 1 + N'$ & $V = N'$ ($j = 1$) \\
(iv)  & $s = 2^t$, $m < t \le n - 2$            & $2^t - 1$      & $V = -N'$ ($j = -1$) \\
\bottomrule
\end{tabular}
\end{center}

\paragraph{Sharpness of the bound.}
The boundary is exactly $t = m$, from both sides:
\begin{itemize}
\item $t \ge m + 1$ $\implies$ $2^t - 1 \ge n$: the $n$ units of digit sum fit
below bit $t$, so the witness $V = 2^t - 2^n$ exists and $2^n - 2^t$ is
invalid.
\item $t = m$ $\implies$ $2^t - 1 \le n - 1 < n$: the same attempt falls below
the floor $n + 1 - 2^n$ (by margin $1$ when $n = 2^m$), and Theorem~A rules out
every positive multiple too: $2^n - 2^m$ is valid.
\end{itemize}

\begin{remark}
In case (iv) the witness is necessarily negative ($j = -1$): no positive
multiple yields a witness at a single-power gap. Indeed \cref{lem:step} applies
at the gap $2^t$ just as at $2^m$, so the induction of \cref{sec:validity}
gives $\smin(jN' + 2^n - 1) \ge n + j > n$ for \emph{every} $j \ge 1$. A
concrete $j = -1$ witness for $n = 5$, $N' = 24 = 2^5 - 2^3$: $k = (3,2,0,0,0)$,
i.e.\ $\{0,0,0,1,1\}$ collides with $\{0,1,3,7,15\}$ --- both sum to $2$ mod
$24$.
\end{remark}

\section{Machine certification}\label{sec:certification}

The results are supported by machine at two independent levels: a Lean
formalization of the full Main Theorem, and exact CP certificates for
\cref{conj:global}.

\paragraph{Lean formalization (all $n$).}
The Main Theorem --- Theorems A and B, for every $n \ge 2$ --- is formalized
and kernel-checked in Lean~4 \cite{demoura2021} with Mathlib
\cite{mathlib2020}: the statement \texttt{nmin\_eq} asserts that
$2^n - 2^{\lfloor \log_2 n \rfloor}$ is the least element of
$\{N \ge 2 : A \text{ valid mod } N\}$, with no unproven assumptions (the only
axioms are Mathlib's standard \texttt{propext}, \texttt{Classical.choice},
\texttt{Quot.sound}). The development mirrors the paper proof: the greedy digit
sum is defined by a one-bit-peeling recursion, \cref{lem:step} is
\texttt{gmin\_step}, and the induction on $j$ is \texttt{slack} --- with, as in
\cref{sec:validity}, no range restriction on $j$ and no small-$n$ cases.
Theorem~B constructs the four witnesses of \cref{sec:optimality} directly.
The full development is available at
\url{https://github.com/jarfo/min-modulus}.

\paragraph{Evidence for \cref{conj:global}.}
Validity of a pair $(A, N)$ is decidable exactly --- without enumerating the
$\binom{2n-1}{n}$ multisets --- by proving a small integer feasibility model
infeasible ($n$ bounded integer variables $k_i$ with $\sum k_i = n$,
$\sum k_i a_i \equiv p \pmod N$, and $k \ne \mathbf{1}$). Leaving the set $A$
free as well (its $n$ residues become variables), CP-SAT \cite{ortools}
certifies, for each modulus $N' < 2^n - 2^m$, that \emph{no}
size-$n$ set is valid mod $N'$ --- one infeasibility certificate per modulus.
This certifies \cref{conj:global} outright for $n \le 7$; the number of moduli
roughly doubles per increment of $n$ and the certificates harden, so beyond
that the conjecture rests on search: for $n \le 13$, extensive solver- and
GPU-search finds valid sets at $N = 2^n - 2^m$ and none below, and every
minimal-modulus solution found canonicalizes, under translation and unit
scaling, to the super-increasing set.

\section{Open problems}\label{sec:open}

\begin{enumerate}
\item \emph{Global optimality} (\cref{conj:global}): prove that no size-$n$
residue set is valid below $2^n - 2^{\lfloor \log_2 n \rfloor}$. The case
analysis of \cref{sec:optimality} uses the binary structure of the fixed set
throughout; a set-free argument would need a different mechanism. Even an
exponential lower bound $\Nmin \ge c^n$ valid for all sets appears to be open.
\item \emph{Structure of minimal sets}: for $n \le 13$ every minimal-modulus
valid set found canonicalizes to the super-increasing one. Is the minimal-
modulus solution unique up to translation and unit scaling for all $n$?
\item \emph{Weighted and rectangular variants}: the validity condition fixes
$\sum_i k_i = n$ with all rows sharing one exponent set. Permanents of
rectangular matrices, or mixed row supports, lead to variants of the validity
condition whose minimal moduli are unexplored.
\item \emph{Beyond cyclic groups}: validity mod $N$ is the case $G = \Z_N$ of a
property of finite abelian groups. Say that $n$ elements $g_0, \dots, g_{n-1}$
of a finite abelian group $G$, with sum $t = \sum_j g_j$, have \emph{unique
multiset sums} if the all-ones multiset is the only size-$n$ multiset drawn from
them that sums to $t$; the companion paper \cite{fonollosa2026transform}
realizes every such family as an exact permanent evaluator by a transform of
length $|G|$, the cyclic case $G = \Z_N$ being validity mod $N$. The Main
Theorem gives the least modulus among cyclic groups, but over \emph{all} finite
abelian groups the least order is strictly smaller: the elementary abelian group
$(\Z_2)^{n-1}$, of order $2^{n-1} < 2^{\,n} - 2^{\lfloor \log_2 n \rfloor}$ for
$n \ge 3$, already carries a size-$n$ family with unique multiset sums
\cite{fonollosa2026transform}, so no cyclic group is optimal. Whether $2^{n-1}$
is the least order over \emph{all} finite abelian groups for every $n$ --- it is
for $n \le 6$ by exhaustive search --- is open; as a first step,
\cref{prop:twogroup} settles it within the elementary abelian $2$-groups, the
family underlying Glynn's scheme. In general the least order is now known to
within a factor $O(\sqrt n\,)$: any such group has order at least
$\binom{2n}{n}/2^{\,n} \sim 2^{\,n}\!/\sqrt{\pi n}$, a bound obtained in
\cite{fonollosa2026transform} by reading the permanent evaluator through the
partial-derivative method of Nisan and Wigderson \cite{nisan1997} --- a
circuit-complexity lower bound yielding a combinatorial one. The remaining
question is the exact value.
\end{enumerate}

\medskip
\noindent For the elementary abelian $2$-groups Problem~4 has an elementary
answer: $2^{n-1}$ is optimal for every $n$, by a rank count over $\F_2$.

\begin{proposition}[optimality among elementary abelian $2$-groups]\label{prop:twogroup}
If $g_0, \dots, g_{n-1} \in (\Z_2)^k$ have unique multiset sums, then $k \ge n-1$.
Hence among elementary abelian $2$-groups the least order admitting a size-$n$
embedding with unique multiset sums is exactly $2^{n-1}$, attained by
$(\Z_2)^{n-1}$ with $g = (0, e_1, \dots, e_{n-1})$.
\end{proposition}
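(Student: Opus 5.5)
Our plan is to prove the lower bound $k \ge n-1$ by a linear-algebra count over $\F_2$, and then to check the construction directly.

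\emph{Lower bound.} Suppose $g_0,\dots,g_{n-1}\in(\Z_2)^k$ have unique multiset sums. Every element of $(\Z_2)^k$ has order dividing $2$, so the sum of a multiset depends only on the multiplicities modulo $2$. A size-$n$ multiset $k'$ reaches the target $t=\sum_j g_j$ exactly when
\[
\sum_j (k'_j - 1)\, g_j = 0 \quad\text{in } (\Z_2)^k.
\]
Any vector $\varepsilon\in\F_2^n$ in the kernel of the linear map $\phi:\F_2^n\to(\Z_2)^k$, $\varepsilon\mapsto\sum_j\varepsilon_j g_j$, produces such a collision. Take $k'_j\equiv 1+\varepsilon_j \pmod 2$, with $k'_j\in\{0,1\}$, and then adjust the total size to $n$ by adding $2$ to some multiplicity. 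This is possible only if the sum of the chosen $\{0,1\}$ entries has the same parity as $n$, that is, if $\sum_j\varepsilon_j\equiv 0\pmod 2$, and the resulting sum is at most $n$.

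So we restrict to the even-weight subspace $E=\{\varepsilon:\sum_j\varepsilon_j=0\}$, which has dimension $n-1$. For nonzero $\varepsilon\in E$ of weight $w$ (even), the entries $1+\varepsilon_j \bmod 2$ give $n-w$ ones. We then add $w/2$ extra units of $2$ to a single index, which reaches size $n$. The result differs from the all-ones multiset because $\varepsilon\neq0$ changes some parity. So uniqueness forces $\phi|_E$ to be injective, hence $n-1=\dim E\le k$.

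\emph{Attainment.} For $(\Z_2)^{n-1}$ with $g=(0,e_1,\dots,e_{n-1})$, a size-$n$ multiset $k'$ reaches $t$ iff $k'_j$ is odd for every $j\ge1$. That already uses at least $n-1$ elements. Then $k'_0=n-\sum_{j\ge1}k'_j\le1$, and parity forces each $k'_j=1$ for $j\ge1$, hence $k'_0=1$. The order is therefore exactly $2^{n-1}$.

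The only delicate step is verifying that every nonzero even-weight kernel vector lifts to a legitimate size-$n$ nonnegative multiset distinct from all-ones, which is the parity and size bookkeeping above. The odd-weight vectors must be discarded, not used, since they cannot be lifted to size $n$.
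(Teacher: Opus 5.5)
Your proof is correct and takes essentially the same route as the paper. Both arguments reduce to the $\F_2$-linear map $\varepsilon \mapsto \sum_j \varepsilon_j g_j$, lift any nonzero even-weight kernel vector to a rival size-$n$ multiset, and finish with a dimension count on the even-weight hyperplane. The differences are cosmetic: you place all the surplus mass on a single index where the paper splits the support into half $2$'s and half $0$'s, and you verify attainment for $(0,e_1,\dots,e_{n-1})$ directly rather than through the converse direction of the paper's kernel criterion.
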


\begin{proof}
Write $G = \F_2^k$ and let $\Lambda : \F_2^n \to \F_2^k$ be the $\F_2$-linear map
$\Lambda(x) = \sum_j x_j g_j$, so that $t = \sum_j g_j = \Lambda(\mathbf 1)$.

Since $2g = 0$ in $G$, the sum $\sum_j k_j g_j$ depends only on the parities
$\bar k_j = k_j \bmod 2$, and $\sum_j k_j g_j = t$ holds iff
$u := \bar k + \mathbf 1 \in \ker \Lambda$ (over $\F_2$, $-1 = 1$); note
$\operatorname{supp} u = \{\, j : k_j \text{ even} \,\}$. We claim the embedding has
unique multiset sums iff $\ker\Lambda$ contains no nonzero vector of even Hamming
weight. If some $u \in \ker\Lambda$ is nonzero with $\operatorname{wt}(u) = 2m$,
form $k$ by setting $k_j = 1$ off $\operatorname{supp} u$ and, on the $2m$
coordinates of $\operatorname{supp} u$, setting $m$ of them to $2$ and the other
$m$ to $0$: then $\bar k + \mathbf 1 = u$, so $\sum_j k_j g_j = t$, while
$\sum_j k_j = (n - 2m) + 2m = n$ and $k \ne \mathbf 1$ --- a genuine rival.
Conversely a rival $k \ne \mathbf 1$ yields $u = \bar k + \mathbf 1 \in \ker\Lambda$
with $u \ne 0$ (else every $k_j$ is odd, hence, being positive with sum $n$, equal
to $1$) and $\operatorname{wt}(u) = n - \operatorname{wt}(\bar k) \equiv 0 \pmod 2$,
since $\sum_j k_j = n$ forces $\operatorname{wt}(\bar k) \equiv n$. This proves the
claim.

The even-weight vectors form the parity hyperplane
$H = \{\, x \in \F_2^n : \operatorname{wt}(x) \equiv 0 \,\}$, of codimension $1$, so
$\ker\Lambda \cap H = \{0\}$ forces $\dim\ker\Lambda \le 1$; then
$\operatorname{rank}\Lambda = n - \dim\ker\Lambda \ge n-1$, and
$\operatorname{im}\Lambda \subseteq \F_2^k$ gives $k \ge n-1$, i.e.\
$|G| = 2^k \ge 2^{n-1}$. Equality holds for $(\Z_2)^{n-1}$ with
$g = (0, e_1, \dots, e_{n-1})$: there $\Lambda(x) = (x_1, \dots, x_{n-1})$ has
kernel $\langle e_0 \rangle$, spanned by a weight-one vector, so the sums are unique
at order $2^{n-1}$.
\end{proof}

\bibliographystyle{plain}
\bibliography{refs}

\end{document}